\documentclass[longtitle]{elsarticle}
\usepackage{amssymb,amsthm,amsmath}

\newtheorem{lem}{Lemma}
\newtheorem{thm}{Theorem}
\newtheorem{cor}{Corollary}

\newcommand{\car}{\square}

\newcommand{\diam}{\operatorname{diam}}
\newcommand{\capt}{\operatorname{capt}}

\begin{document}

\begin{abstract}
We consider the game of Cops and Robber played on 
the Cartesian product of two trees.
Assuming the players play perfectly,
it is shown that if there are two cops in the game,
then the length of the game 
(known as the 2-capture time of the graph) 
is equal to half the diameter of the graph.
In particular, 
the 2-capture time of the $m\times n$ grid
is proved to be $\lfloor \frac{m+n}{2}\rfloor -1$.
\end{abstract}

\begin{keyword}
Cops and robber game \sep Capture time \sep Cartesian products \sep Grids
\end{keyword}

\author[co]{A. Mehrabian\corref{cor}}
\ead{amehrabian@uwaterloo.ca}
\cortext[cor]{Current address: Department of Combinatorics and Optimization, University of Waterloo, 200 University Avenue West, Waterloo, Ontario, Canada N2L 3G1}
\address[co]{Department of Computer Engineering\\
Sharif University of Technology\\
Azadi Avenue\\
Tehran, Iran}

\title{The capture time of grids}

\maketitle

\section{Introduction}

All graphs we consider are undirected, simple, finite, and connected.
Cops and Robber is a vertex pursuit game played on a graph, first introduced in  \cite{NW,Qu}.
We consider two teams of players, a set of $k$ cops $\mathcal C$ and a single robber $\mathcal R$. We think of the players (cops, robber) as occupying vertices, or being at vertices; a vertex can be occupied by more than one player. 
The two teams play in rounds.
In round 0, first each of the cops chooses a vertex to start, and then the robber chooses a vertex. In subsequent rounds, first each cop moves and then the robber moves. 
Each move takes one time unit and each player can either stay or go to an adjacent vertex in its move.
The cops win and the game ends if the robber is at a vertex occupied by some cop; otherwise, the robber wins.
Each player knows the other players' locations during the game.

As placing a cop on each vertex guarantees that the cops win, we may define the \emph{cop number} of a graph $G$, denoted by $c(G)$, as the minimum number of cops that have a winning strategy on $G$.
The graphs with cop number at most $k$ are \emph{$k$-cop-win} graphs.
For a survey of results on the cop number and related search parameters for graphs, see \cite{Alspach,Hahn}.

In a $k$-cop-win graph, how many moves does it take for the $k$ cops to win? To be more precise, say that the length of a game is $t$ if
the robber is captured in round $t$. The length is infinite if the robber can evade capture forever. We say that a play of the game
with $k$ cops is optimal if its length is the minimum over all possible games, assuming the robber is trying to evade capture
for as long as possible. 
There may be many optimal plays possible (for example, in a path with four vertices, one cop may start on either vertex of the centre), but the length of an
optimal game is an invariant of $G$. We denote this invariant by $\capt_k(G)$, and call it the \emph{$k$-capture time} of $G$.  The capture time of a graph may be viewed as the temporal counterpart of its cop number. 
The concept was first introduced in \cite{capturetime}, and is in part motivated by the fact that in real-world networks with limited resources, not only the number of cops but also the time it takes to capture the robber on the network is of practical importance.

It is known that for a fixed $k$, one can compute $\capt_k(G)$ (or decide that $G$ is not $k$-cop-win) in polynomial time \cite{kcopslrobbers}.
The capture time of 1-cop-win graphs was studied in \cite{capturetime,gavenciak}.
In this paper we study the 2-capture time of the Cartesian products of two trees, which are known to have cop number 2.
We prove that if $G$ is the Cartesian product of two trees, then $\capt_2(G) = \left\lfloor \diam(G) / 2\right\rfloor.$
In particular, the 2-capture time of the $m\times n$ grid is $\lfloor \frac{m+n}{2}\rfloor -1$.
It turns out that the techniques used here are not strong enough to determine the exact capture time of products of more than two trees (or higher-dimensional grids).
Hence the generalization of the results to more than two trees is left open (see section~\ref{sec_open} for some bounds).

In the following we will assume that after round 0, in all  subsequent rounds, the robber moves before the cops. This change clearly does not affect the cop number, and neither does it affect the capture time, because of the following observations: First, if a game is optimal, then the robber always gets caught on the cops' move. And second, since the robber is the player who decides her starting location last, making the first move does not give her any advantage.

Basic definitions come in section~\ref{sec_def}, and the main result is proved in section~\ref{sec_grid}. In section~\ref{sec_open} some open problems are raised.

\section{Definitions}
\label{sec_def}
Let $G$ be a graph.
We denote the distance between two vertices $u$ and $v$ in $G$ by $d(u,v)$.
The \emph{diameter} of $G$ is the maximum distance between any two vertices of $G$, and is denoted by $\diam(G)$.
For example, the diameter of a tree is the length of its longest paths.
We will assume that $\diam(T)>0$ for all trees $T$ in the following.
%The \emph{centre} of $G$ is a vertex $c$ such that $\max\{d(c,v):v\in V(G)\}$ is minimum.
The set of neighbors of a vertex $u$ is denoted by $N(u)$, and $N[u]$ is the union $N(u) \cup \{u\}$.
If $c(G) \leq k$ then we call an element $(c_1,c_2,\dots,c_k) \in \left(V(G)\right)^k$ a \emph{central $k$-tuple} if $k$ cops have an optimal strategy 
starting from these vertices.

\begin{thm}\label{onetree}
For any tree $T$, we have $\capt_1(T) = \lceil \diam(T) / 2\rceil$.
\end{thm}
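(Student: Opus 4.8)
The plan is to prove the two bounds $\capt_1(T)\le\lceil\diam(T)/2\rceil$ and $\capt_1(T)\ge\lceil\diam(T)/2\rceil$ separately. Throughout I will use the standard fact that a tree has radius $\lceil\diam(T)/2\rceil$, so some vertex (a \emph{center}) has eccentricity exactly $\lceil\diam(T)/2\rceil$; and I will use that trees are cop-win, so that the capture time is finite and both inequalities are meaningful. For the reformulation of the game fixed above (robber moves first in every round after round $0$), the upper bound amounts to a cop strategy, started from a suitable vertex, that always captures by round $\lceil\diam(T)/2\rceil$, and the lower bound amounts to a robber strategy that, against \emph{any} cop play, avoids capture in each of rounds $0,1,\dots,\lceil\diam(T)/2\rceil-1$.

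For the upper bound I would let the cop start at a center $c$ and, at each of its moves, step to the neighbor of its current vertex that lies on the unique path to the robber's current vertex. Rooting $T$ at $c$, the key claim---proved by induction on the round number---is that, until the robber is caught, after the cop's move in round $i$ the cop stands at depth $i$ and the robber occupies a descendant of the cop's vertex. The depth part is immediate, since at each move the cop descends exactly one level (it can fail to do so only when it already stands on the robber, i.e.\ the robber is caught). The confinement part holds because a robber on a proper descendant of the cop's vertex cannot reach a non-descendant without stepping onto the cop's vertex---which she will not do---and once the cop descends to the child of its vertex lying above the robber's new position, the robber is again trapped below. Since every vertex of $T$ lies at depth at most $\lceil\diam(T)/2\rceil$ from $c$, the cop reaches that depth by round $\lceil\diam(T)/2\rceil$ (if the game has not already ended), whereupon the robber, being a descendant of a deepest vertex, must coincide with the cop; hence capture occurs by round $\lceil\diam(T)/2\rceil$.

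I expect this confinement argument to be the point that needs the most care: the cop--robber distance does \emph{not} decrease monotonically---if the robber keeps fleeing downward, the cop merely keeps pace---so the naive ``distance drops by one each round'' reasoning is false. What actually forces capture is that the subtree still available to the robber loses one unit of height with each round; this is what the induction must capture, and it is also why starting the cop at a center (rather than anywhere) is essential.

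For the lower bound, fix a diametral path with endpoints $a$ and $b$, so $d(a,b)=\diam(T)$. Whatever vertex $c$ the cop starts from, the triangle inequality gives $d(c,a)+d(c,b)\ge\diam(T)$, hence one endpoint---say $b$---satisfies $d(c,b)\ge\lceil\diam(T)/2\rceil$. The robber starts at $b$ and simply stays there for the whole game. Each cop move changes the distance to $b$ by at most one, so after the cop's move in round $i$ that distance is at least $\lceil\diam(T)/2\rceil-i$, which is positive for every $i\le\lceil\diam(T)/2\rceil-1$; hence the robber is not captured in any of rounds $0,1,\dots,\lceil\diam(T)/2\rceil-1$. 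Thus against every cop strategy the capture round is at least $\lceil\diam(T)/2\rceil$, and together with the upper bound this yields $\capt_1(T)=\lceil\diam(T)/2\rceil$.
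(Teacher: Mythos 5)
Your proof is correct and follows essentially the same route as the paper: the lower bound is the same triangle-inequality argument with the robber parked at a far endpoint of a diametral path, and the upper bound starts the cop at a center (the paper uses the midpoint of a longest path, which is a center) and observes that the cop's depth increases by one each round while the robber stays confined below. The only difference is that you make explicit the confinement induction that the paper leaves implicit in the phrase ``the cop moves towards the robber, and his distance from $u$ increases by $1$.''
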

\begin{proof}
Let $d= \diam(T)$, and let $t=\capt_1(T)$.
Let $P$ with vertices $a_1,\dots,a_{d+1}$ be a longest path in $T$, and let $v$ be a central $1$-tuple. 
We must have $d(v,a_1) \leq t$ since a robber that starts at $a_1$ and stays there forever should be captured in $t$ rounds, and similarly $d(v,a_{d+1}) \leq t$. 
This shows $d \leq 2t$ thus $t \geq \lceil d/2 \rceil$.

On the other hand, let $u= a_{1+\lceil d/2 \rceil}$; we claim that a cop starting from $u$ can capture the robber in at most $\lceil d/2 \rceil$ rounds.
The distance from any vertex to $u$ is at most $\lceil d/2 \rceil$. In every round, the cop moves towards the robber, and his distance from $u$ increases by $1$. Hence after at most $\lceil d/2 \rceil$ rounds the robber is captured.
\end{proof}

The \emph{Cartesian product} of two graphs $G$ and $H$ is a graph with vertex set $V(G) \times V(H)$, and with vertices $(u_1,v_1)$ and $(u_2,v_2)$ adjacent if either $u_1=u_2$ and $v_1v_2 \in E(H)$, or $v_1=v_2$ and $u_1u_2\in E(G)$.
We denote the Cartesian product of $G$ and $H$ by $G \car H$.
It is not hard to check that $\diam(G\car H) = \diam(G) + \diam(H)$.

\section{The capture time of the Cartesian product of trees}
\label{sec_grid}
The aim of this section is to prove that for trees $T_1$ and $T_2$,
$$\capt_2(T_1\car T_2) = \left\lfloor \frac{\diam(T_1\car T_2)}{2}\right\rfloor.$$
Note that it is known that $c(T_1\car T_2) = 2$ \cite{products}.

\subsection{Upper bound}

Let $T$ be a tree rooted at $s$. By \emph{going up} or \emph{down} we mean moving towards $s$ or away from $s$, respectively.
We say that vertex $v$ is a \emph{descendant} of vertex $u$ if $u$ is contained in the unique ($s,v$)-path (each vertex is a descendant of itself). The \emph{height} of a vertex $v$, written $h(v)$, is defined by $$h(v) = \max\{d(v, l): l\mathrm{\ is\ a\ leaf\ and\ a\ descendant\ of\ }v\}.$$

\begin{lem}
\label{lem_difficult}
Let $T_1, T_2$ be rooted trees.
We denote by $(u_1, u_2), (v_1, v_2)$, and $(r_1, r_2)$ the coordinates of $\mathcal{C}_1, \mathcal{C}_2$ (the first and second cops) and $\mathcal R$ (the robber) in $T_1 \car T_2$, respectively.
Suppose that the following conditions hold:
\begin{itemize}
\item $r_i$ is a descendant of $u_i$ and $v_i$ for $i=1,2$, 
\item $d(v_1,r_1) = 1 + d(u_1,r_1)$, 
\item $u_2 = v_2$, 
\item $d(u_2, r_2) \in \{d(u_1, r_1), d(v_1, r_1)\}$. 
\end{itemize}
If it is $\mathcal R$ to move, then the cops can capture her in at most $m$ rounds, where $m = \left\lfloor\left(h(u_1)+h(v_1)+h(u_2)+h(v_2)\right)/2\right\rfloor$.
\end{lem}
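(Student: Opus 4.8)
The plan is to prove the lemma by induction on the integer
\[
H := h(u_1)+h(v_1)+h(u_2)+h(v_2),
\]
observing that $m = \lfloor H/2\rfloor$. First I would read off the geometry forced by the hypotheses: since $r_1$ is a descendant of both $u_1$ and $v_1$ these vertices are comparable in $T_1$, and as $d(v_1,r_1)=1+d(u_1,r_1)$ the vertex $v_1$ must be the parent of $u_1$, so $h(v_1)\ge h(u_1)+1$, while $u_2=v_2$ gives $h(u_2)=h(v_2)$. Write $a:=d(u_1,r_1)$ and $b:=d(u_2,r_2)$, so that the fourth hypothesis becomes $b\in\{a,a+1\}$, i.e.\ $0\le b-a\le 1$; picture $(a,b)$ as the displacement of $\mathcal{R}$ from $\mathcal{C}_1$. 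For the base of the induction note that $a=b=0$ means $\mathcal{R}$ already sits on $\mathcal{C}_1$ (capture in $0$ rounds), and that $H=1$ --- the least value $H$ can take --- forces $u_1$ and $u_2$ to be leaves and hence $a=b=0$. So in the inductive step I may assume that $a$ or $b$ is positive, and the goal is to let $\mathcal{R}$ make one move, reply so that after that round either $\mathcal{R}$ is captured or the four hypotheses hold again with $H$ dropped by at least $2$ (hence $m$ by at least $1$), and then invoke the inductive hypothesis to finish within $1+(m-1)=m$ rounds.

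The heart of the argument is the cops' reply to each of the five moves of $\mathcal{R}$ (stay, descend in $T_1$ or $T_2$, ascend in $T_1$ or $T_2$), always chosen so that each cop steps to a \emph{child} of one of its two coordinates --- which by itself forces $H$ down by at least $2$. The reply rule I would use is: if $\mathcal{R}$ descends in $T_i$ then \emph{both} cops descend in $T_i$ toward her; if $\mathcal{R}$ stays put then both cops descend toward her, in $T_1$ when $a=b$ and in $T_2$ when $b=a+1$; and if $\mathcal{R}$ ascends in $T_i$ then both cops descend toward her in the \emph{other} tree $T_{3-i}$. For each of these one checks that the prescribed move is legal (the relevant coordinate of $\mathcal{R}$ is a strict descendant of the matching coordinate of both cops), and that afterwards $\mathcal{R}$ is still a descendant of both cops in both coordinates, $v_1$ is still the parent of $u_1$, the two cops still occupy a common $T_2$-vertex, and $b\in\{a,a+1\}$ is restored --- unless $\mathcal{R}$ has already been captured during that round, which is what happens in the boundary configurations with $a=0$. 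The observation that makes these checks go through is that, because $\mathcal{C}_1$ and $\mathcal{C}_2$ share their $T_2$-coordinate, they can both step to the \emph{same} child there in one round; this is exactly what keeps the condition $u_2=v_2$ alive under a descent in $T_2$, the analogue in $T_1$ being that the child of $v_1$ on the way to $r_1$ is $u_1$ itself.

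The delicate point, and where I expect essentially all of the effort to go, is preserving the fourth hypothesis $b\in\{a,a+1\}$: this is the ``balance'' between the two displacement coordinates that prevents $\mathcal{R}$ from fleeing arbitrarily deep into one tree. The temptation is to chase in whichever tree $\mathcal{R}$ just moved in, but that is wrong --- if $\mathcal{R}$ moves \emph{toward} the cops in $T_i$ while a cop moves toward her in $T_i$, the gap in that coordinate changes by $2$ and the balance is destroyed. What repairs this is precisely the asymmetry in the rule above --- chase in the same tree when $\mathcal{R}$ flees, in the complementary tree when she retreats, and in the ``longer'' of the two trees when she waits --- so that each coordinate of the displacement changes by at most $1$ per round and the pair never leaves the set $\{(a,b):0\le b-a\le 1\}$. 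Working through the resulting handful of cases, including the small $a=0$ cases where the game ends within the round, completes the induction; everything else is bookkeeping.
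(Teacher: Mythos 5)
Your proposal is correct and follows essentially the same route as the paper: induction on the height sum, with the same five-case reply rule (chase downward in the tree where $\mathcal{R}$ descends, in the other tree where she ascends, and in the tree selected by whether $d(u_2,r_2)$ equals $d(u_1,r_1)$ or $d(v_1,r_1)$ when she stays), and the same treatment of the boundary configurations where the round ends in capture. The $(a,b)$-displacement bookkeeping is just a reformulation of the paper's invariant $d(u_2,r_2)\in\{d(u_1,r_1),d(v_1,r_1)\}$.
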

\begin{proof}
We use induction on $m$. If $m=0$, then $h(u_2)=h(v_2)=0$, since $u_2=v_2$. 
Thus $u_2$ is a leaf of $T_2$. The vertex $r_2$ is a descendent of $u_2$, so $r_2=u_2$.
Hence one of $d(u_1, r_1)$, $d(v_1, r_1)$ is also zero, and $\mathcal R$ is already captured.

Suppose that $m>0$. The robber either moves on $T_1$ (up or down), moves on $T_2$ (up or down), or does not move at all. Let $(r'_1, r'_2)$ be her position after the move. There are five cases to consider, in all of which we give a move for the cops, which results in an immediate capture or a similar situation with $m$ decreased by 1:
\begin{description}
\item[The robber moves up on $T_1$:] If $r_1=u_1$, then since $\mathcal R$ was not previously captured, 
$d(u_2,r_2) > 0 = d(u_1, r_1)$.
Since $d(u_2, r_2) \in \{d(u_1, r_1), d(v_1, r_1)\}$, we must have $d(u_2, r_2) = d(v_1, r_1)$.
On the other hand, $v_2=u_2$ and $d(v_1,r_1) = 1 + d(u_1,r_1)$.
Thus $d(v_2,r_2)=1$.
After $\mathcal R$'s move we have $r'_1=v_1$ since $v_1$ is the parent of $u_1$. Now $\mathcal{C}_2$ can immediately capture $\mathcal R$.
If $r_1 \neq u_1$, then $r'_1$ is still a descendent of $u_1$. The cops move down towards $r_2$ on $T_2$.
\item[The robber moves down on $T_1$:] The cops move down towards $r_1$ on $T_1$.
\item[The robber moves up on $T_2$:] 
If $r'_2=u_2$, then $\mathcal{C}_1$ can immediately capture the robber.
Otherwise, the cops move down towards $r_1$ on $T_1$. 
\item[The robber moves down on $T_2$:] The cops move down towards $r_2$ on $T_2$.
\item[The robber does not move:] If $d(u_2, r_2) = d(v_1, r_1)$, then the cops move down towards $r_2$ on $T_2$.
Otherwise, we have $d(u_2, r_2) = d(u_1, r_1)$, and the cops move down towards $r_1$ on $T_1$.
\end{description}
\end{proof}

\noindent\textbf{Remark.}
The point of $d(u_2, r_2) =d(v_2, r_2) \in \{d(u_1, r_1), d(v_1, r_1)\}$ is that $\mathcal R$ cannot pass through the cops in $T_1$ or $T_2$, since she will be immediately captured if she wants to go higher than $u_1$ (in $T_1$) or $u_2$ (in $T_2$).

\begin{lem}
If $T_1$ and $T_2$ are trees with diameter $2m+1$ and $2n$, respectively, then $\capt_2(T_1\car T_2) \leq m + n$.
\end{lem}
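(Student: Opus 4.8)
The plan is to invoke Lemma~\ref{lem_difficult} after choosing the starting positions carefully and, when necessary, playing a short herding phase first. Since $\diam(T_1)=2m+1$ is odd, $T_1$ has a central edge $x_1y_1$: deleting it splits $T_1$ into a component $A$ containing $x_1$, all of whose vertices lie within distance $m$ of $x_1$, and a symmetric component $B$ containing $y_1$. Since $\diam(T_2)=2n$ is even, $T_2$ has a central vertex $c_2$ of eccentricity $n$. I would have the cops start at $(x_1,c_2)$ and $(y_1,c_2)$.

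After the robber reveals her starting vertex $(r_1,r_2)$, by the symmetry of $x_1$ and $y_1$ we may assume $r_1\in A$, so that $x_1$ lies on the $(y_1,r_1)$-path (the case $r_1=x_1$ included). Root $T_2$ at $c_2$ and $T_1$ at $y_1$, and set $u_1=x_1$, $v_1=y_1$, $u_2=v_2=c_2$. Then three of the four hypotheses of Lemma~\ref{lem_difficult} hold automatically: each $r_i$ is a descendant of $u_i$ and $v_i$, $u_2=v_2$, and $d(v_1,r_1)=1+d(u_1,r_1)$. The central-edge and central-vertex properties give $h(x_1)=m$, $h(y_1)=m+1$ and $h(c_2)=n$, so the lemma, when applicable, yields the bound
\[
\left\lfloor\frac{h(u_1)+h(v_1)+h(u_2)+h(v_2)}{2}\right\rfloor
=\left\lfloor\frac{m+(m+1)+2n}{2}\right\rfloor=m+n .
\]
Hence, if the fourth hypothesis, $d(c_2,r_2)\in\{d(x_1,r_1),d(y_1,r_1)\}$, also holds, the cops win in at most $m+n$ rounds and the proof is complete.

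If the fourth hypothesis fails, the cops first play a herding phase in which each of them moves down exactly one level every round: while $d(u_2,r_2)>d(u_1,r_1)+1$ both cops descend toward $r_2$ in $T_2$, keeping their $T_1$-coordinates fixed; otherwise $\mathcal{C}_1$ descends toward $r_1$ in $T_1$ while $\mathcal{C}_2$ moves into $\mathcal{C}_1$'s former vertex, re-rooting $T_1$ at the appropriate endpoint of the central edge and renaming the cops whenever the robber's $T_1$-coordinate crosses from $A$ to $B$ or back. The cops leave the herding phase the instant the fourth hypothesis becomes satisfied and then follow the strategy of Lemma~\ref{lem_difficult}. The bookkeeping is carried by the potential $\Phi=\lfloor(h(u_1)+h(v_1)+h(u_2)+h(v_2))/2\rfloor$: it equals $m+n$ at the outset, it drops by at least $1$ in every round of the herding phase (each cop descends a level, while the robber stays a descendant of both cops in each coordinate), and once the hypotheses of Lemma~\ref{lem_difficult} hold the game ends within $\Phi$ further rounds; so a herding phase of length $k$ costs at most $k+(m+n-k)=m+n$ in total.

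The step I expect to be the main obstacle is showing that the herding phase is legitimate, that is, that the robber can never climb strictly above a cop (which would destroy $\Phi$ as a bound on the remaining time) and that the phase terminates. Descending in $T_2$ is only invoked while $d(u_2,r_2)\ge 2$, so there the robber is never one move from passing the cops; and while the cops descend in $T_1$, the fourth hypothesis is precisely the condition that forbids escape, since if the robber's $T_1$-coordinate were about to reach $\mathcal{C}_1$'s, their $T_2$-coordinates would already coincide and she would be captured instead --- this is the mechanism already exploited inside the proof of Lemma~\ref{lem_difficult}. The delicate situation is the robber cornered with $d(u_1,r_1)$ and $d(u_2,r_2)$ both tiny and attempting to retreat upward in $T_2$: there the cops must chase her up in $T_2$ and arrive at an instance of Lemma~\ref{lem_difficult} with the roles of $T_1$ and $T_2$ interchanged, and one has to check, after re-rooting $T_2$ appropriately, that this exchange does not increase $\Phi$. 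Termination then follows because the robber's depth in $T_1$ is bounded by $m$, so she cannot forever keep $d(u_1,r_1)$ large enough to block the fourth hypothesis while the cops shrink $d(u_2,r_2)$. Once these invariants are in hand, the height and distance computations used above are routine.
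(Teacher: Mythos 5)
Your proposal is correct and follows essentially the same route as the paper: the cops start on the central edge of $T_1$ crossed with the centre of $T_2$ (exactly the pair $(a_{m+1},b_{n+1})$, $(a_{m+2},b_{n+1})$ on longest paths that the paper uses), play a herding phase that decreases $h(u_1)+h(v_1)+h(u_2)+h(v_2)$ by $2$ per round until $d(u_2,r_2)\in\{d(u_1,r_1),d(v_1,r_1)\}$, and then invoke Lemma~\ref{lem_difficult} for a total of at most $t+\lfloor(2m+2n+1-2t)/2\rfloor=m+n$ rounds. The legitimacy concerns you raise about the herding phase (the robber crossing the central edge, re-rooting) are genuine but are treated no more explicitly in the paper's own proof, so your argument does not diverge from it.
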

\begin{proof}
Let $P_1$ and $P_2$ be longest paths in $T_1$ and $T_2$, respectively. 
Label the vertices of $P_1$ as $a_1,a_2,\dots, a_{2m+2}$ and the vertices of $P_2$ as $b_1,b_2,\dots, b_{2n+1}$.
The cops start at $(u_1,u_2) = (a_{m+1},b_{n+1}), (v_1,v_2) = (a_{m+2},b_{n+1})$, and suppose that $\mathcal R$ goes to $(r_1,r_2)$ in her first move. We may assume, by symmetry, that $d(r_1, u_1) < d(r_1, v_1)$. Select $v_1, v_2$ as roots of $T_1, T_2$. Thus $h(u_1) = m$, $h(v_1) = m+1$, and $h(u_2)=h(v_2) = n$. If $d(u_2,r_2) \in \{d(u_1, r_1),d(v_1,r_1)\}$, then we can apply Lemma~\ref{lem_difficult}.
Otherwise, in the first phase of their strategy, the cops try to satisfy the conditions of Lemma~\ref{lem_difficult}.
If $d(u_2,r_2) < d(u_1, r_1)$, then the cops move down on $T_1$ towards $r_1$ to reduce $d(u_1, r_1)- d(u_2,r_2)$ to zero. If $d(u_2, r_2) > d(v_1, r_1)$, then the cops move down on $T_2$ towards $r_2$ to reduce $d(u_2, r_2) - d(v_1, r_1)$. In both cases they will eventually succeed to satisfy $d(u_2,r_2) \in \{d(u_1, r_1),d(v_1,r_1)\}$. As soon as they reach this situation, the first phase finishes. In the second phase, they will use the strategy given in Lemma~\ref{lem_difficult} to capture $\mathcal R$. We will now analyze the capture time of this strategy.

Initially $h(u_1) + h(v_1) + h(u_2) + h(v_2) = 2m+2n+1$. In each round of the first phase, the cops decrease the left side by 2. If the first phase takes $t$ rounds, then by Lemma~\ref{lem_difficult}, the second phase will take at most $\lfloor \left(2m+2n+1-2t\right)/2 \rfloor$ rounds. Consequently, $$\capt_2(T_1\car T_2) \leq t + \left \lfloor (2m+2n+1-2t)/2 \right\rfloor = m + n.$$
\end{proof}

\begin{cor}
When $T_1$ and $T_2$ are trees, $\capt_2(T_1\car T_2) \leq \left \lfloor \diam(T_1\car T_2)/2\right\rfloor$.
\end{cor}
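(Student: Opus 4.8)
The plan is to reduce the general statement to the case already handled by the preceding lemma (one tree of odd diameter and the other of even diameter), by a short parity analysis combined with a leaf-addition trick. Write $d_1=\diam(T_1)$ and $d_2=\diam(T_2)$, so that $\diam(T_1\car T_2)=d_1+d_2$. If $d_1$ and $d_2$ have opposite parities, say $d_1=2m+1$ and $d_2=2n$, then the previous lemma gives $\capt_2(T_1\car T_2)\le m+n=\lfloor(d_1+d_2)/2\rfloor$ directly. So it remains to treat the case where $d_1,d_2$ are both even and the case where they are both odd.

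The tool for these two cases is the following monotonicity fact: if $T'$ is obtained from a tree $T$ by attaching one new pendant leaf, then $\capt_2(G\car T)\le\capt_2(G\car T')$ for every graph $G$. Indeed, the map $f\colon T'\to T$ that fixes $T$ pointwise and sends the new leaf to its unique neighbour is a retraction (a graph homomorphism fixing $T$), hence so is $\mathrm{id}_G\car f\colon G\car T'\to G\car T$. Two cops on $G\car T$ can therefore simulate an optimal game on $G\car T'$: they maintain a ``shadow robber'' at the real robber's vertex (which lies in $G\car T$), read off the cops' prescribed response in $G\car T'$, and move to its image under $\mathrm{id}_G\car f$. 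Since $\mathrm{id}_G\car f$ is a homomorphism, it maps each legal cop move to a legal move and maps a capture in the shadow game to a capture in the real game in the same round, which yields the inequality.

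Now, if $d_1$ and $d_2$ are both even, attach a pendant leaf at an endpoint of a longest path of $T_1$ to obtain $T_1'$; since that endpoint has eccentricity exactly $d_1$ in $T_1$, we get $\diam(T_1')=d_1+1$, which is odd, so the opposite-parity case gives $\capt_2(T_1'\car T_2)\le(d_1+d_2)/2$, and monotonicity gives $\capt_2(T_1\car T_2)\le(d_1+d_2)/2=\lfloor(d_1+d_2)/2\rfloor$. If $d_1,d_2$ are both odd, perform the same construction on $T_2$ to get $T_2'$ with $\diam(T_2')=d_2+1$ even; then $\capt_2(T_1\car T_2)\le\capt_2(T_1\car T_2')\le(d_1+d_2+1)/2=\lfloor(d_1+d_2)/2\rfloor$. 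These cases are exhaustive, so the corollary follows.

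I expect no genuine obstacle here: the only points needing care are the bookkeeping items, namely verifying that attaching the leaf at a diametral endpoint raises the diameter by exactly one (it cannot raise it by more, as that endpoint already realizes the diameter) and phrasing the retraction argument so that it delivers the capture-\emph{time} inequality and not merely the cop-number inequality. All the substantive work is contained in the lemmas already proved.
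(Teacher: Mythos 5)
Your proof is correct and follows essentially the same route as the paper: condition on the parities of $\diam(T_1)$ and $\diam(T_2)$, attach a pendant leaf at a diametral endpoint to reduce to the odd/even case of the preceding lemma, and invoke the fact that adding a leaf does not decrease the capture time of the product (which the paper merely asserts and you justify with the retraction/shadow-strategy argument). The only blemish is a typo in the both-odd case: the bound the lemma gives there is $m+(n+1)=(d_1+d_2)/2=\lfloor(d_1+d_2)/2\rfloor$, not $(d_1+d_2+1)/2$, which is not an integer in that case.
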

\begin{proof}
Clearly $\diam(T_1\car T_2) = \diam(T_1) + \diam(T_2)$ and the corollary is proved by conditioning on the parity of $\diam(T_1)$ and $\diam(T_2)$, and noticing that adding a leaf to either of $T_1,T_2$ does not decrease the capture time of $T_1\car T_2$.
\end{proof}

\subsection{Lower bound}
A vertex $u$ is a \emph{corner} if there exists another vertex $v$ with $N[u] \subseteq N[v]$.

\begin{lem}\label{diamlowerbound}
Let $G$ be a $2$-cop-win graph, $u$ be a vertex of $G$ contained in an induced cycle $C$ of length $4$ with the property that for every vertex $v \in V(G)$, $|N(v) \cap V(C)| \leq 2$ holds. If $(c_1,c_2)$ is a central $2$-tuple, then $$d(u,c_1)+d(u,c_2) \leq 2\capt_2(G)+1.$$
\end{lem}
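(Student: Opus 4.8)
The plan is to use the robber's perspective: she will play a strategy that forces the game to last at least $\tfrac12\bigl(d(u,c_1)+d(u,c_2)-1\bigr)$ rounds. The idea is to exploit the induced $4$-cycle $C$ through $u$. Write $C = u\,x\,w\,y\,u$, so that $x$ and $y$ are the two neighbours of $u$ on $C$ and $w$ is the vertex of $C$ antipodal to $u$. The key structural fact we get from the hypothesis ``$|N(v)\cap V(C)|\le 2$ for all $v$'' is that no single cop move can threaten both of two ``opposite'' vertices of $C$ simultaneously in the relevant way: a cop adjacent to (or sitting on) one of $u,x,w,y$ cannot also be adjacent to the vertex diagonally across from it, because that would require a vertex with three neighbours on $C$ (itself plus two others) — here one has to be a little careful, since a cop \emph{on} a vertex of $C$ has that vertex in $N[\cdot]$, so the precise claim is that a cop cannot in one move reach a position dominating two vertices of $C$ that are at distance $2$ in $C$. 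I would isolate this as the first small lemma.

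Second, I would set up the robber strategy. The robber starts at $u$ (she moves last in round $0$, so she is free to do this after seeing $c_1,c_2$). Thereafter, on her move she stays put unless a cop is about to capture her, in which case she slides along $C$ to an uncaptured vertex of $C$; I would argue, using the first lemma, that such an escape vertex on $C$ always exists as long as neither cop is already adjacent to (or on) the robber's current vertex — i.e. as long as the game is not essentially over. The upshot is a ``potential'' argument: define $\Phi = d(r,c_1) + d(r,c_2)$ where $r$ is the robber's current position, and observe that each cop move decreases its own distance-to-robber by at most $1$, while the robber's responses along $C$ change each distance by at most $1$ and can be chosen so that the total drop in $\Phi$ per round is at most $2$ — with the extra saving of $1$ coming from the round-$0$ placement asymmetry (the robber's final free placement at $u$) and from the fact that in the capturing round only one more unit of progress is needed. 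Since initially $\Phi = d(u,c_1)+d(u,c_2)$ and at capture $\Phi = 0$, this yields $d(u,c_1)+d(u,c_2) \le 2\,\capt_2(G) + 1$.

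Third, to make the potential bookkeeping honest I would phrase the induction on the number of rounds remaining, tracking the invariant that before the cops' move in each round the robber sits at a vertex of $C$ not in $N[c_1]\cup N[c_2]$, or else she is caught within one more round. The escape step is exactly where the $4$-cycle and the degree hypothesis are used: after the cops move to $c_1',c_2'$, at most two vertices of $C$ lie in $N[c_1']$ and at most two lie in $N[c_2']$ — but I need a current robber position on $C$ with an adjacent $C$-vertex that avoids both $N[c_1']$ and $N[c_2']$. A $4$-cycle has exactly $4$ vertices, and each $N[c_i']\cap V(C)$ has size at most $2$; a short case check (four possibilities for the robber's vertex, and the adjacency structure of $C_4$) shows a safe neighbour exists unless one of the $N[c_i']\cap V(C)$ already contains the robber's vertex, i.e. unless she is caught or about to be. The main obstacle is precisely controlling this escape argument cleanly: one must rule out the degenerate configuration where the two cops' closed neighbourhoods jointly cover $V(C)$ in a way that pins the robber, and this is exactly what ``$|N(v)\cap V(C)|\le 2$'' (applied to each cop, so that no cop covers three vertices of $C$, hence together they cover at most the whole $4$-cycle but never ``trap'' a robber who still has a free adjacent $C$-vertex before the fatal round) is designed to prevent.
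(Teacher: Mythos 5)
Your overall plan --- park the robber on the $4$-cycle $C$ and run a potential argument on $\Phi = d(r,c_1)+d(r,c_2)$ --- is a genuinely different route from the paper's proof (which splits into the cases $d(u,c_1)=\capt_2(G)$ and $d(u,c_1)<\capt_2(G)$ and in each case bounds the second cop's distance directly), and it could in principle be made to work, but as written it has a genuine gap at the crucial step. You assert that ``at capture $\Phi=0$.'' Capture only requires \emph{one} cop to reach the robber's vertex; the other cop could a priori still be far away, so the terminal value of $\Phi$ need not be $0$ and your inequality chain does not close. The entire content of the lemma is that the \emph{second} cop is also forced to come close, and that is exactly the step your writeup does not supply; the appeal to ``the extra saving of $1$'' does not substitute for it. To repair the argument you would need an endgame analysis: show that under your escape strategy the robber is captured only after being trapped, i.e.\ only when her vertex and both of its $C$-neighbours all lie in $N[c_1]\cup N[c_2]$; use the hypothesis to show that no single cop can cover all three of these vertices; conclude that at that moment one cop is at distance $1$ and the other at distance at most $2$ from her, so $\Phi\leq 3$; and combine this with the drop of at most $2$ per round to obtain $d(u,c_1)+d(u,c_2)\leq 2(T-1)+3\leq 2\capt_2(G)+1$.

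Two further problems. First, you correctly warn early on that a cop standing \emph{on} a vertex of $C$ has three vertices of $C$ in its closed neighbourhood (the hypothesis only bounds the open neighbourhood), but your escape step then relies on the claim that ``at most two vertices of $C$ lie in $N[c_i']$,'' which is false in exactly that situation; the case check for a safe neighbour must allow $|N[c_i']\cap V(C)|=3$. Second, the claim that the robber's responses keep the per-round drop of $\Phi$ at most $2$ is true but unproven: one must observe that an escape move takes her from distance $1$ to distance at least $2$ from each threatening cop while decreasing her distance to any other cop by at most $1$, so her move never decreases $\Phi$. For contrast, the paper's proof avoids all of this bookkeeping: it uses only that no vertex of $C$ is a corner, so one cop alone cannot win against a robber confined to $C$, and then reads off the bound on $d(u,c_2)$ either from the fact that $\mathcal{C}_2$ must capture the dodging robber at a neighbour of $u$ in round $\capt_2(G)$, or from the fact that $\mathcal{C}_2$ must at least reach $C$ within $\capt_2(G)$ rounds.
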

\begin{proof}
Let $t = \capt_2(G)$.
It can be seen that none of the vertices of $C$ is a corner, hence one cop alone cannot capture $\mathcal R$ even if $\mathcal R$ is restricted to move on $C$.
Let $d_1= d(c_1, u)$ and $d_2 = d(c_2, u)$, and assume, by symmetry, that $d_1 \leq d_2$. 
Since $\mathcal R$ will be captured in at most $t$ rounds if she starts at $u$ and remains there, we have $d_1 \leq t$. There are two cases to consider:
\begin{description}
\item[$d_1 = t$:] 
Suppose that $\mathcal R$ starts at $u$ and does not move in the first $t-1$ rounds.
Then after $t-1$ rounds, $\mathcal{C}_1$ should be in a vertex $c_1$ adjacent to $u$.
Since $u$ is not a corner, it has a neighbor $u'$ with $u' \notin N[c_1]$. 
Thus if $\mathcal R$ moves to $u'$ in round $t$, then $\mathcal{C}_2$ must immediately capture her.
Therefore $d_2 \leq t+1$. This gives $d_1+d_2 \leq 2t+1$.
\item[$d_1 < t$:] Since $\mathcal{C}_1$ alone cannot capture $\mathcal R$ even if $\mathcal R$ is restricted to move on $C$, $\mathcal{C}_2$ should be able to reach $C$ in at most $t$ rounds.
Therefore, $d_2 \leq t+2$ and $d_1 + d_2 \leq t-1 + t+2 = 2t+1$.
\end{description}
\end{proof}

\begin{lem}\label{lem_lower}
When $T_1$ and $T_2$ are trees, $\capt_2(T_1\car T_2) \geq \lfloor \diam(T_1\car T_2)/2\rfloor$.
\end{lem}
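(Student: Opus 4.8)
The plan is to reduce the lower bound to Lemma~\ref{diamlowerbound} by exhibiting a suitable induced $4$-cycle in $T_1 \car T_2$ and estimating how far a central $2$-tuple must sit from a well-chosen vertex on it. Write $d = \diam(T_1\car T_2) = \diam(T_1) + \diam(T_2)$ and $t = \capt_2(T_1\car T_2)$; we want $t \geq \lfloor d/2 \rfloor$.

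First I would pick a longest path $P_i$ in $T_i$ and an edge $e_i = x_i y_i$ on it near the end: specifically, choose $x_i$ to be a leaf endpoint of $P_i$ and $y_i$ its neighbour on $P_i$. The four vertices $(x_1,x_2),(x_1,y_2),(y_1,x_2),(y_1,y_2)$ form an induced $4$-cycle $C$ in $T_1\car T_2$ (it is induced precisely because $x_1y_1$ and $x_2y_2$ are edges while $x_1y_1\notin$ any triangle in a tree, so the two ``diagonal'' pairs are non-adjacent). One checks that no vertex of $T_1\car T_2$ has three neighbours on $C$: a vertex $(a,b)$ adjacent to $(x_1,x_2)$ and $(y_1,x_2)$ must have $b=x_2$ and $a$ adjacent to both $x_1,y_1$, which forces $a\in\{x_1,y_1\}$; similar bookkeeping in the other cases shows $|N(v)\cap V(C)|\le 2$ always. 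So $C$ satisfies the hypotheses of Lemma~\ref{diamlowerbound}. Now take $u = (x_1,x_2)$, the ``far corner''. By Lemma~\ref{diamlowerbound}, for a central $2$-tuple $(c_1,c_2)$ we have $d(u,c_1)+d(u,c_2)\le 2t+1$.

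The remaining task is to show $d(u,c_1)+d(u,c_2)\ge d$, which would give $2t+1\ge d$, i.e.\ $t\ge \lceil (d-1)/2\rceil = \lfloor d/2\rfloor$ when $d$ is even, and $t \ge (d-1)/2 = \lfloor d/2\rfloor$ when $d$ is odd. To prove $d(u,c_1)+d(u,c_2)\ge d$, I would argue by contradiction via the diametral structure: let $u' = (z_1,z_2)$ be the opposite endpoint, where $z_i$ is the other end of $P_i$, so $d(u,u')=d$. If both cops were within total distance less than $d$ of $u$, I'd want to let the robber start at $u'$ and show she survives too long. More cleanly: in $T_1\car T_2$ distances split coordinatewise, $d((a_1,a_2),(b_1,b_2)) = d_{T_1}(a_1,b_1)+d_{T_2}(a_2,b_2)$, so $d(u,c_j) = d_{T_1}(x_1,c_j^{(1)}) + d_{T_2}(x_2,c_j^{(2)})$. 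If the robber commits to staying at $u' = (z_1,z_2)$, she is caught in $\le t$ rounds, giving $d(u',c_j)\le t$ for $j=1,2$ — but that only bounds one cop's distance, not the sum, so this direct approach needs the corner argument of Lemma~\ref{diamlowerbound} rather than a repeat of it. Instead I would simply invoke Lemma~\ref{diamlowerbound} as above and then separately verify $d(u,c_1)+d(u,c_2)\ge d$ by noting that otherwise one of the $c_j$, say $c_1$, has $d(u,c_1) < d/2$, hence $d(u',c_1) > d/2 \ge d(u,c_1)$, and by a parity/triangle-inequality argument along the diametral path, a robber positioned at $u'$ can force a chase of length exceeding $t$ unless the second cop compensates — and the second cop is then too far from $u$, contradicting the central-tuple optimality when a robber instead sits at $u$.

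The main obstacle I anticipate is the last step: cleanly establishing $d(u,c_1)+d(u,c_2)\ge d$ for a central $2$-tuple. The subtlety is that a central tuple is defined by the existence of an optimal \emph{strategy}, not by a static distance condition, so one cannot merely say ``the cops must be near both $u$ and $u'$.'' The right move is probably to run the robber's evasion argument symmetrically at $u$ and at $u'$ and combine, or to observe that since $\capt_1(\cdot)$ of the relevant embedded path/tree already forces each cop's reach, the product structure makes the two coordinate-projections of the cops behave like two independent $1$-cop chases whose capture times add up to at least $\lceil \diam(T_1)/2\rceil + \lceil\diam(T_2)/2\rceil$; reconciling that sum of ceilings with $\lfloor d/2\rfloor$ is exactly where the parity cases (which $T_i$ has odd diameter) must be handled, mirroring the corollary in the upper-bound subsection.
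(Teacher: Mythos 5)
Your setup is right --- the induced $4$-cycle at the corner $(x_1,x_2)$, the verification that no vertex has three neighbours on it, and the application of Lemma~\ref{diamlowerbound} to get $d(u,c_1)+d(u,c_2)\le 2t+1$ all match what is needed --- but the proof has a genuine gap exactly where you flag it: you never establish $d(u,c_1)+d(u,c_2)\ge d$, and the sketched arguments for it (robber sits at $u'$ and ``forces a chase of length exceeding $t$ unless the second cop compensates'') do not go through, because, as you yourself note, placing the robber at a single vertex only bounds the distance from \emph{one} cop, not the sum. Worse, the inequality you are after is not actually available: from the two corner bounds one can only deduce $d(u,c_1)+d(u,c_2)\ge 2d-(2t+1)$, which equals $d-1$ when $t=\lfloor d/2\rfloor$ and $d$ is even, so $d(u,c_1)+d(u,c_2)\ge d$ may simply fail for a legitimate central $2$-tuple. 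Any attempt to prove it is therefore doomed, and the coordinate-projection idea in your last paragraph (two independent $1$-cop chases whose capture times add) also fails, since a single cop can and must move in both coordinates and the game does not decouple.

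The missing idea is to apply Lemma~\ref{diamlowerbound} \emph{twice}, at the two diametrically opposite corners $u=(x_1,x_2)$ and $v=(z_1,z_2)$ (the vertex $v$ lies on an induced $4$-cycle of the same kind), and then to sidestep any lower bound on $d(u,c_1)+d(u,c_2)$ entirely by using the triangle inequality through each cop separately:
\begin{equation*}
2\diam(G)=2d(u,v)\le \bigl(d(u,c_1)+d(c_1,v)\bigr)+\bigl(d(u,c_2)+d(c_2,v)\bigr)\le (2t+1)+(2t+1)=4t+2.
\end{equation*}
Dividing by $2$ gives $\diam(G)\le 2t+1$, and integrality of $t$ then yields $t\ge\lfloor \diam(G)/2\rfloor$ exactly as in the parity check you already carried out correctly at the end of your second paragraph. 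With that replacement for your third paragraph the proof is complete and coincides with the paper's.
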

\begin{proof}
Let $G= T_1 \car T_2$, $m = \diam(T_1)$, and $n = \diam(T_2)$.
Let $P_1$, $P_2$ be longest paths in $T_1$ and $T_2$, respectively.
Label the vertices of $P_1$ as $a_1,a_2,\dots,a_{m+1}$, and vertices of $P_2$ as $b_1,b_2,\dots,b_{n+1}$.
Let $u = (a_1, b_1)$ and $v =(a_{m+1}, b_{n+1})$.
The vertex $u$ is contained in the cycle formed by the four vertices $(a_1,b_1)$, $(a_1,b_2)$, $(a_2,b_2)$, and $(a_2,b_1)$, and satisfies the conditions of Lemma~\ref{diamlowerbound}. 
The vertex $v$ satisfies the conditions of Lemma~\ref{diamlowerbound}, too.
Let $(c_1, c_2)$ be a central 2-tuple.
By triangle inequality and Lemma~\ref{diamlowerbound}, $$2\diam(G) = 2d(u,v) \leq d(c_1,u) + d(c_1,v) + d(c_2,u) + d(c_2,v) \leq 4\capt_2(G)+2.$$ We are done after dividing by 2 and noticing that $\capt_2(G)$ is an integer.
\end{proof}

\begin{thm}
For every two trees $T_1, T_2$ we have $$\capt_2(T_1\car T_2) = \left\lfloor \diam(T_1\car T_2) / 2\right\rfloor.$$
\end{thm}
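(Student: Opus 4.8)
The plan is straightforward: the theorem is exactly the conjunction of the two bounds proved in the preceding subsections. The Corollary at the end of the upper-bound subsection supplies $\capt_2(T_1\car T_2) \le \lfloor \diam(T_1\car T_2)/2 \rfloor$, and Lemma~\ref{lem_lower} supplies the reverse inequality $\capt_2(T_1\car T_2) \ge \lfloor \diam(T_1\car T_2)/2 \rfloor$. So I would simply cite both and conclude equality; there is nothing further to do in this proof itself.

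For perspective, it is worth recalling where the content actually lies. The lower bound rests on Lemma~\ref{diamlowerbound}, whose idea is that a vertex lying in an induced $4$-cycle that no single vertex dominates more than half of cannot be guarded by one cop alone; placing this at two antipodal corners of a longest diagonal of $T_1\car T_2$ and summing four instances of the resulting distance inequality, via the triangle inequality, yields $2\diam(G) \le 4\capt_2(G)+2$, and integrality of $\capt_2(G)$ absorbs the surplus. The upper bound rests on Lemma~\ref{lem_difficult}, which maintains a cop-pair invariant (the robber is a descendant of both cops in each coordinate, the cops are one step apart in $T_1$ and coincident in $T_2$, and the $T_2$-distance to the robber equals one of the two $T_1$-distances) that survives any robber move while a height-potential decreases by one; a short first phase establishes this invariant from the symmetric central start, and then the invariant runs the clock down to capture in $m+n$ rounds.

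Hence the only ``obstacle'' for this final statement is that there is none: all the difficulty was front-loaded into Lemmas~\ref{lem_difficult} and~\ref{diamlowerbound} together with their corollaries, and the theorem is the one-line synthesis of the matching inequalities. The specialization to the $m\times n$ grid then follows by taking $T_1=\path{m}$, $T_2=\path{n}$, so that $\diam(T_1\car T_2)=(m-1)+(n-1)$ and $\lfloor \diam(T_1\car T_2)/2\rfloor = \lfloor (m+n)/2\rfloor - 1$.
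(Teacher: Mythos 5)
Your proposal is correct and matches the paper exactly: the theorem is stated without further argument precisely because it is the immediate conjunction of the upper-bound corollary and Lemma~\ref{lem_lower}. Your summary of where the real work lies (Lemmas~\ref{lem_difficult} and~\ref{diamlowerbound}) and the grid specialization are also consistent with the paper.
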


\begin{cor}\label{T1T2carsum}
For every two trees $T_1, T_2$ we have $$\capt_1(T_1)+capt_1(T_2)-1 \leq \capt_2(T_1\car T_2) \leq \capt_1(T_1)+\capt_1(T_2).$$
\end{cor}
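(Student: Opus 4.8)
The plan is to reduce the statement entirely to an arithmetic fact about floors and ceilings, since all of the game-theoretic content is already contained in the results proved above. Write $m = \diam(T_1)$ and $n = \diam(T_2)$. By Theorem~\ref{onetree} we have $\capt_1(T_1) = \lceil m/2 \rceil$ and $\capt_1(T_2) = \lceil n/2 \rceil$, and by the theorem just proved, together with the identity $\diam(T_1 \car T_2) = m + n$, we have $\capt_2(T_1 \car T_2) = \lfloor (m+n)/2 \rfloor$. Hence it suffices to establish, for all positive integers $m$ and $n$, the double inequality
$$\left\lceil \frac{m}{2} \right\rceil + \left\lceil \frac{n}{2} \right\rceil - 1 \;\leq\; \left\lfloor \frac{m+n}{2} \right\rfloor \;\leq\; \left\lceil \frac{m}{2} \right\rceil + \left\lceil \frac{n}{2} \right\rceil.$$

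Next I would verify this by conditioning on the parities of $m$ and $n$; there are only three essentially distinct cases (both even; one even and one odd; both odd), and in each case the three quantities can be written out explicitly. In the both-even case the middle term equals the right-hand term; in the mixed case the middle term equals the left-hand term; and in the both-odd case the middle term again equals the left-hand term, while the right-hand term exceeds it by exactly $1$. In every case both inequalities hold, which finishes the argument.

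I do not expect any genuine obstacle here: the only mild subtlety is the bookkeeping with the floor and ceiling functions, and one should just make sure that the hypotheses used earlier (in particular $\diam(T) > 0$ for both trees, so that the capture times are defined and the longest-path labellings make sense) are in force when invoking Theorem~\ref{onetree} and the main theorem. If one prefers to avoid the case analysis, an equivalent route is to observe that $\lfloor (m+n)/2 \rfloor - \lceil m/2 \rceil - \lceil n/2 \rceil + 1$ and $\lceil m/2 \rceil + \lceil n/2 \rceil - \lfloor (m+n)/2 \rfloor$ are each readily seen to be nonnegative by substituting $m, n \in \{0,1\} \pmod 2$, but the explicit parity check is just as short.
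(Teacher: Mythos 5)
Your proposal is correct and is exactly the intended derivation: the paper states this corollary without proof, as it follows immediately from Theorem~\ref{onetree}, the main theorem, and the identity $\diam(T_1\car T_2)=\diam(T_1)+\diam(T_2)$, reducing to the parity check $\lceil m/2\rceil+\lceil n/2\rceil-1\leq\lfloor (m+n)/2\rfloor\leq\lceil m/2\rceil+\lceil n/2\rceil$, which you verify correctly in all three cases.
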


\begin{cor}
The $2$-capture time of the $m\times n$ grid is equal to $\lfloor (m+n)/2\rfloor - 1$.
\end{cor}

\begin{proof}
The $m\times n$ grid is the Cartesian product of a path with $m$ vertices and a path with $n$ vertices, and has diameter $m+n-2$.
\end{proof}

\section{Open problems}
\label{sec_open}
In the following we write $\capt(G)$ instead of $\capt_{c(G)}G$.
The cop number of the Cartesian product of $n$ trees is known to be $\lceil (n+1)/2 \rceil$, see \cite{products}.

We have seen that if $T_1,T_2$ are trees, then
$$\capt(T_1\car T_2) = \left\lfloor \frac12 \sum_{i=1}^{2} \diam(T_i)\right\rfloor.$$
It would be interesting to extend this result for more trees. 
Unfortunately, there is no easy generalization of the efficient capturing strategy given in Lemmas~1,~2.
Nevertheless, the author was able to prove the following bounds.

If $T_1,T_2,T_3$ are trees, then 
$$\left\lfloor \frac12 \sum_{i=1}^{3} \diam(T_i) \right\rfloor \leq \capt(T_1\car T_2 \car T_3) \leq 1 + \sum_{i=1}^{3} \diam(T_i).$$
The lower bound has a proof similar to Lemma~\ref{lem_lower}, and the proof for the upper bound uses some ideas from \cite{products} and some  new ideas.

Weaker bounds can be proved when the number of trees becomes larger than three:
If $T_1,T_2,\dots,T_n$ are trees, then $$\capt \left(\car_{i=1}^n T_i\right) \leq \sum_{i=1}^n \left(2^{\lceil \frac{i}{2} \rceil}-1\right) \diam(T_i).$$
Finding better bounds for higher-dimensional grids (which are Cartesian products of more than two paths) would also be interesting.

{}

\end{document}